 \newtheorem{thm}{}[section]
 \newtheorem{theorem}[thm]{Theorem}
 \newtheorem{corollary}[thm]{Corollary}
 \theoremstyle{definition}
 \theoremstyle{remark}
 \numberwithin{equation}{section}
\newcommand{\NN}{\ensuremath{\mathbb{N}}}
\newcommand{\RR}{\ensuremath{\mathbb{R}}}
\newcommand{\XX}{\ensuremath{\mathbb{X}}}
\newcommand{\xx}{\ensuremath{\mathbf{e}}}
\newcommand{\BB}{\mathcal{B}}
\newcommand{\GG}{\mathcal{G}}
\begin{document}

\title{Characterization of $1$-quasi-greedy bases}
\author[F. Albiac]{F. Albiac}\address{Mathematics Department\\ Universidad P\'ublica de Navarra\\Campus de Arrosad\'{i}a\\
 Pamplona\\ 31006 Spain\\
 Tel.: +34-948-169553\\
 Fax: +34-948-166057\\
} \email{fernando.albiac@unavarra.es}
 \author[J. L. Ansorena]{J. L. Ansorena}\address{Department of Mathematics and Computer Sciences\\
Universidad de La Rioja\\ Edificio Luis Vives\\
 Logro\~no\\
26004 Spain\\
 Tel.: +34-941299464 \\
 Fax: +34-941299460} \email{joseluis.ansorena@unirioja.es}

\subjclass[2000]{46B15, 41A65}

\keywords{thresholding greedy algorithm, quasi-greedy basis, unconditional basis}

\begin{abstract} We show that a (semi-normalized) 
basis 
in a Banach space 
 is  quasi-greedy with quasi-greedy constant equal to $1$ if and only if 
 it is unconditional with suppression-unconditional constant equal to $1$.  \end{abstract}


\maketitle

\section{Introduction and background}\label{intro}
\noindent Let $(\XX, \Vert\cdot\Vert)$ be an infinite-dimensional Banach space, and let $\BB=(\xx_n)_{n=1}^\infty$ be a semi-normalized  basis for $\XX$ with biorthogonal functionals 
$(\xx_n^*)_{n=1}^\infty$.
The basis $\BB$ is  \textit {quasi-greedy} if for any $x\in \XX$ the corresponding series expansion, 
\[
x=\sum_{n=1}^\infty \xx_n^*(x)\xx_n
\]
converges in norm after  reordering it so that the   sequence  $(|\xx_n^*(x)|)_{n=1}^\infty$ is decreasing. Wojtaszczyk showed  \cite{Wo2000} that a basis $(\xx_n)_{n=1}^\infty$  of $\XX$ is quasi-greedy if and only if the greedy operators  $\GG_N\colon \XX\to \XX$ defined by
\[
x=\sum_{j=1}^{\infty}\xx_j^{\ast}(x)\xx_{j} \mapsto \GG_{N}(x)=\sum_{j\in \Lambda_{N}(x)}\xx_j^{\ast}(x)\xx_{j},
\]
where $\Lambda_{N}(x)$ is any $N$-element set of indices such that 
\[
\min\{|\xx_{j}^{\ast}(x)| \colon  j\in \Lambda_{N}(x)\}\ge \max\{|\xx_{j}^{\ast}(x)| \colon j\not\in \Lambda_{N}(x)\},
\]
are uniformly bounded, i.e.,
\begin{equation}\label{WoCondition}
\Vert \GG_N(x)\Vert\le C \Vert x\Vert, \quad x \in \XX, \,  N\in\NN.
\end{equation}
for some constant $C$ independent of $x$ and $N$. Note that the operators $(\GG_{N})_{N=1}^{\infty}$ are neither linear nor continuous, so this is not just the Uniform Boudedness Principle!

 Obviously, If \eqref{WoCondition} holds  then there is a (possibly different) constant $\tilde{C}$ such that 
\begin{equation}\label{SupWoCondition}
\Vert x-\GG_N(x)\Vert\le \tilde{C} \Vert x\Vert, \quad x \in \XX, \,  N\in\NN.
\end{equation}
We will denote by $C_w$ the smallest constant such that \eqref{WoCondition}  holds,  and by  $C_t$ the least constant in \eqref{SupWoCondition}. It is rather common (cf.\ \cites{GHO2013,DKKT2003}) and convenient to define the {\it quasi-greedy constant} $C_{qg}$ of the basis as
\begin{equation*}\label{WojtCharacterization}
C_{qg}=\max\{ C_w, C_t \}.
\end{equation*}
If  $\BB$ is a quasi-greedy basis and $C$ is a  constant such that $C_{qg}\le C$ we will say that $\BB$ is  {\it $C$-quasi-greedy}.

 Recall also that a basis  $(\xx_n)_{n=1}^\infty$ in a Banach space $\XX$ is {\it unconditional}   if for any $x\in \XX$ the series $\sum_{n=1}^\infty \xx_n^*(x)\xx_n$ converges in norm to $x$ regardless of the order in which we arrange
 the terms. The property of being unconditional is easily seen to be equivalent to that of being {\it suppression unconditional}, which means that the natural projections onto any subsequence  of the basis
 \[
 P_A(x)=\sum_{n\in A} \xx_n^*(x) \xx_n,\quad A\subset \mathbb N,
 \]
are uniformly bounded, i.e., there is a constant $K$ such that for all $x=\sum_{n=1}^{\infty} \xx_n^*(x) \xx_n$ and all $A\subset \NN$,
\begin{equation}\label{unifbound}
\left\Vert\sum_{n\in A} \xx_n^*(x) \xx_n \right\Vert \le K \left\Vert\sum_{n=1}^{\infty} \xx_n^*(x) \xx_n \right\Vert.
\end{equation}
The smallest $K$ in \eqref{unifbound} is the {\it suppression unconditional constant} of the basis, and will be denoted by $K_{su}$. Notice that
 \[
 K_{su}=\sup  \{ \Vert P_A \Vert \colon A\subseteq \NN \text{ is finite} \}=\sup  \{ \Vert P_A \Vert \colon A\subseteq \NN \text{ is cofinite}\}.
 \]
 If  a basis $\BB$ is unconditional and $K$ is a constant such that  $ K_{su}\le K$  we will say that $\BB$  is {\it $K$-suppression unconditional}.    
 
 Quasi-greedy bases are not in general unconditional; in fact, most classical spaces contain conditional quasi-greedy bases. Wojtaszczyk gave in \cite{Wo2000} a general construction  (improved in \cite{GW2014}) to produce quasi-greedy bases in some Banach spaces. His method yields the existence of  conditional quasi-greedy bases in separable Hilbert spaces, in the spaces $\ell_{p}$ and $L_{p}[0,1]$  for $1<p<\infty$, and  in the Hardy space $H_1$. 
 Dilworth and Mitra showed in \cite{DilworthMitra2001} that $\ell_1$ also has a conditional quasi-greedy basis. In spite of  that, quasi-greedy bases preserve some vestiges of unconditionality and, for instance, they are {\it unconditional for constant coefficients} (see \cite{Wo2000}).

Conversely,  unconditional bases are always quasi-greedy. To be precise, if $\BB$ is   $K$-suppression unconditional  then $\BB$ is 
 $K$-quasi-greedy. In particular, unconditional bases with $K_{su}=1$ are quasi-greedy with $C_{w}=1$. Our aim  is to show  the converse  of this statement, thus characterizing $1$-quasi-greedy bases. The related problem of characterizing bases that are $1$-greedy was solved in \cite{AlbiacWojt2006}. This question is relevant since the optimality in the constants of  greedy-like bases seems to improve the properties of the corresponding basis. Indeed, in the 
 ''isometric case" greedy bases gain in symmetry (they are invariant under greedy permutations instead of merely democratic). Our result reinforces  this pattern  by showing that ''isometric'' quasi-greedy basis are not merely unconditional for constant coefficients but unconditional.

\section{The Main Theorem and its Proof}
 \noindent As a by-product of their research on  unconditionality-type properties of quasi-greedy bases, Garrig\'os and Wojtaszczyk  \cite{GW2014}  have shown that 
  bases in Hilbert spaces with $C_w=1$ are orthogonal. A direct proof of their result can be obtained as follows. 

Let
$\BB=(\xx_n)_{n=1}^\infty$  be a basis in a (real or complex) Hilbert space with  $C_w=1$. Then, if $|\varepsilon|=1$,  $0< t < 1$, and $i\not=j$,
\[
\Vert \xx_i \Vert^2  \le \Vert  \xx_i +\varepsilon t \xx_j\Vert^2=\Vert \xx_i\Vert^2+2 t  \Re(\varepsilon \langle \xx_i,\xx_j \rangle) + t^2 \Vert \xx_j\Vert^2.
\]
Simplifying,
\[
-2 \Re(\varepsilon \langle \xx_i,\xx_j \rangle) \le t \Vert \xx_j\Vert^2.
\]
Choosing $\varepsilon$ such that  $ \varepsilon\langle \xx_i,\xx_j \rangle =-| \langle \xx_i,\xx_j \rangle|$ and letting $t$ tend to zero we obtain $ | \langle \xx_i,\xx_j \rangle|=0$.

A strengthening of this argument leads to the following generalization of  Garrig\'os-Wojtaszczyk's result.

\begin{theorem}\label{1QGTheorem} A quasi-greedy basis $(\xx_{n})_{n=1}^{\infty}$ in a Banach space $\XX$ is quasi-greedy  with $C_w=1$ if and only if it is unconditional with suppression unconditional constant $K_{su}=1$.
\end{theorem}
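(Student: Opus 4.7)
The plan is to establish the non-trivial direction: $C_w=1 \Rightarrow K_{su}=1$. The strategy combines a perturbation argument (to realise an arbitrary finite projection as a greedy operator applied to a nearby vector) with a convexity argument (to transport the resulting inequality back to the original vector).

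First I would reduce to finitely supported vectors. Given any $x\in\XX$ and any finite $A\subset\NN$, applying $P_A$ to the partial sum $x_N=\sum_{n=1}^N \xx_n^*(x)\xx_n$ with $N\ge \max A$ leaves $P_A(x)$ unchanged, while $x_N\to x$ in norm; so it suffices to prove $\Vert P_A(y)\Vert\le\Vert y\Vert$ for every finite sum $y=\sum_{n\in F} a_n\xx_n$ with $a_n\neq 0$ for $n\in F$ and every $A\subseteq F$.

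Put $B=F\setminus A$ (the case $B=\emptyset$ being trivial) and consider
\[
\phi(t)=\Vert P_A(y)+t\,P_B(y)\Vert,\quad t\in\RR,
\]
a convex function satisfying $\phi(0)=\Vert P_A(y)\Vert$ and $\phi(1)=\Vert y\Vert$. For $0<t<t_0:=(\min_{n\in A}|a_n|)/(\max_{m\in B}|a_m|)$, the coefficients of $P_A(y)+t\,P_B(y)$ at indices in $A$ strictly dominate in modulus those at indices in $B$, so $\GG_{|A|}(P_A(y)+tP_B(y))=P_A(y)$. The hypothesis $C_w=1$ then yields $\phi(0)\le\phi(t)$ for all such $t$.

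To conclude I would invoke the elementary fact that a convex function $\phi\colon\RR\to\RR$ with $\phi(0)\le\phi(t)$ for $t$ in some right neighbourhood of $0$ automatically satisfies $\phi(0)\le\phi(s)$ for every $s\ge 0$: otherwise convexity would give $\phi(\lambda s)\le (1-\lambda)\phi(0)+\lambda\phi(s)<\phi(0)$ for all $\lambda\in(0,1]$, a contradiction for small $\lambda$. Taking $s=1$ yields $\Vert P_A(y)\Vert\le\Vert y\Vert$, and hence $K_{su}\le 1$; since $K_{su}\ge 1$ is automatic (take $A$ to be a singleton), we obtain $K_{su}=1$. The reverse implication $K_{su}=1\Rightarrow C_w=1$ is the standard inequality $C_w\le K_{su}$ already recorded in the introduction. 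I do not foresee a real obstacle; the only mild subtlety is passing to the support of $y$ so that the greedy choice $\GG_{|A|}$ is unambiguous.
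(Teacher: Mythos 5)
Your proposal is correct and takes essentially the same route as the paper: both proofs hinge on the convexity of $t\mapsto\Vert P_A(y)+tP_B(y)\Vert$ together with the observation that for small $t>0$ the greedy operator $\GG_{|A|}$ applied to $P_A(y)+tP_B(y)$ returns exactly $P_A(y)$, so that $C_w=1$ forces $\Vert P_A(y)\Vert\le\Vert P_A(y)+tP_B(y)\Vert$. The only difference is presentational: you propagate the inequality from a right neighbourhood of $0$ up to $t=1$ directly by convexity, while the paper runs the same convexity argument as a proof by contradiction starting from the assumption $\Vert y\Vert<\Vert P_A(y)\Vert$.
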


\begin{proof} 
We need only show that if  $x$ and  $y$  are vectors finitely  supported in $(\xx_{n})_{n=1}^{\infty}$ with disjoint supports then $\Vert x \Vert \le \Vert x+y\Vert$. This readily implies that $(\xx_{n})_{n=1}^{\infty}$ is unconditional with suppression unconditional constant $K_{su}=1$. 

Suppose that this in not the case and that we can pick  $x$, $y\in \XX$ finitely and disjointly supported in $(\xx_{n})_{n=1}^{\infty}$  with  $\Vert x+y\Vert  <\Vert x \Vert$.
Consider the function $\varphi\colon\RR\to[0,\infty)$ defined by  
\[
\varphi(t)=\Vert x + t y\Vert.
\]
Using the definition, it is straightforward to check that $\varphi$ is a convex function on the entire real line.   Moreover,  $\varphi(0)=\Vert x\Vert$ and, by  assumption, $\varphi(1)<\Vert x\Vert$. Therefore, $\varphi(t)<\Vert x \Vert $
for all $0<t<1$. Choosing $t\in (0,1)$ small enough we have  $x=\GG_N(x+ty)$, where $N$ is the cardinal of the support of $x$. Consequently, for such a $t$,
\[
\Vert x+t y \Vert =\varphi(t)
<\Vert x \Vert =\Vert \GG_N(x+ty) \Vert \le \Vert x+t y \Vert,
\]
where we used the hypothesis on the quasi-greedy constant of the basis to obtain the last inequality. This absurdity proves the result.
\end{proof}

We close with some consequences  of Theorem~\ref{1QGTheorem}, which need no further explanation.
\begin{corollary} Suppose $\BB=(\xx_{n})_{n=1}^{\infty}$ is a basis in a Banach space $\XX$ with
 $C_w=1$. Then $C_t=1$; in particular $\BB$ is  $1$-quasi-greedy.
\end{corollary}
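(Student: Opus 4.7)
The plan is to derive the corollary as an immediate consequence of Theorem~\ref{1QGTheorem}. The hypothesis $C_w=1$ puts us exactly in the situation of that theorem, which tells us $\BB$ is suppression unconditional with constant $K_{su}=1$. So every suppression projection $P_A$ satisfies $\Vert P_A \Vert \le 1$ (for finite, hence by the remark in the introduction also for cofinite, $A \subseteq \NN$).

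Next I would translate the truncation operator $x - \GG_N(x)$ into the language of suppression projections. For any $x \in \XX$ and any admissible greedy set $\Lambda_N(x)$, one has
\[
x - \GG_N(x) = \sum_{j \notin \Lambda_N(x)} \xx_j^*(x) \xx_j = P_{\Lambda_N(x)^c}(x),
\]
and $\Lambda_N(x)^c$ is cofinite. Applying $K_{su}=1$ to this cofinite set yields
\[
\Vert x - \GG_N(x) \Vert = \Vert P_{\Lambda_N(x)^c}(x) \Vert \le \Vert x \Vert,
\]
valid for every $x \in \XX$ and every $N \in \NN$. This shows $C_t \le 1$, and since $C_t \ge 1$ is forced by taking, say, a single basis vector $x=\xx_n$ with $N=0$, we conclude $C_t = 1$.

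Combining with the hypothesis, $C_{qg} = \max\{C_w, C_t\} = 1$, so $\BB$ is $1$-quasi-greedy, as claimed. I do not expect any real obstacle here: the entire content lies in invoking Theorem~\ref{1QGTheorem} and then observing that truncation and suppression projection by a cofinite set coincide. The only thing to be mildly careful about is remembering that $K_{su}$ was defined via finite or cofinite sets (which is what we need, since $\Lambda_N(x)^c$ is cofinite rather than finite).
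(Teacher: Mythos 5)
Your proof is correct and is exactly the route the paper intends: the corollary is stated there without proof as an immediate consequence of Theorem~\ref{1QGTheorem}, and the whole content is your observation that $x-\GG_N(x)=P_{\Lambda_N(x)^c}(x)$ with $\Lambda_N(x)^c$ cofinite, so that $K_{su}=1$ gives $C_t\le 1$. The only quibble is the appeal to $N=0$ for the reverse inequality $C_t\ge 1$ (the paper takes $N\in\NN$, so $\GG_0$ is not among the operators considered); this is harmless, since $C_t\le 1$ together with $C_w=1$ already yields $C_{qg}\le 1$, which is all that ``$1$-quasi-greedy'' requires under the paper's definition.
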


\begin{corollary}  If a  basis $(\xx_{n})_{n=1}^{\infty}$
 in a Banach space $\XX$ is  $1$-quasi-greedy then it is  $1$-suppression unconditional.\end{corollary}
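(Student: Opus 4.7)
The plan is to deduce this corollary immediately from Theorem \ref{1QGTheorem}. First I would unpack the hypothesis: by the definition $C_{qg} = \max\{C_w, C_t\}$, being $1$-quasi-greedy means $C_{qg} \le 1$, so in particular $C_w \le 1$.

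Next I would check that the reverse inequality $C_w \ge 1$ holds automatically (for instance, applying $\GG_1$ to any single basis vector $\xx_n$ returns $\xx_n$ itself, so the ratio $\Vert \GG_1(\xx_n)\Vert/\Vert \xx_n\Vert$ is exactly $1$). Combining this with $C_w \le 1$ gives $C_w = 1$ on the nose.

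At this point I would invoke the non-trivial implication of Theorem \ref{1QGTheorem} (the direction from $C_w = 1$ to $K_{su} = 1$) to conclude that $(\xx_n)_{n=1}^\infty$ is unconditional with $K_{su} = 1$, which is precisely the statement that the basis is $1$-suppression unconditional. There is no genuine obstacle here: the substantive work, namely the convexity argument with $\varphi(t) = \Vert x + ty\Vert$ and the choice of a small $t$ for which $\GG_N(x+ty) = x$, was already carried out in the proof of Theorem \ref{1QGTheorem}. The corollary simply repackages the hypothesis $C_w = 1$ inside the stronger-sounding label \emph{$1$-quasi-greedy}, and then reads off the conclusion from the theorem.
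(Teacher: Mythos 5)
Your proposal is correct and matches the paper's intent exactly: the paper states that this corollary follows from Theorem~\ref{1QGTheorem} with no further explanation, and your argument (unpacking $C_{qg}\le 1$ to get $C_w\le 1$, observing $C_w\ge 1$ via $\GG_1(\xx_n)=\xx_n$, and then invoking the theorem) is precisely the intended deduction. The extra remark that $C_w\ge 1$ automatically is a worthwhile detail, since the theorem's hypothesis is $C_w=1$ rather than $C_w\le 1$.
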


\begin{corollary} Suppose  $\BB=(\xx_{n})_{n=1}^{\infty}$ is a basis in a Banach space $(\XX, \Vert\cdot\Vert)$.  Then $\XX$ admits an equivalent norm $\VERT\cdot\VERT$ so that $\BB$ is $1$-quasi-greedy in  the space $(\XX, \VERT\cdot\VERT)$ if and only if $\BB$ is unconditional.

\end{corollary}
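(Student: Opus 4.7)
The plan is to deduce this corollary by combining Theorem~\ref{1QGTheorem} with a standard renorming trick. The ``only if'' direction is immediate: if $\VERT\cdot\VERT$ is an equivalent norm for which $\BB$ is $1$-quasi-greedy, then by Theorem~\ref{1QGTheorem} $\BB$ is $1$-suppression unconditional in $(\XX,\VERT\cdot\VERT)$, and in particular unconditional there. Since unconditionality is a purely topological property of the basis and is preserved under passage to an equivalent norm, $\BB$ is also unconditional in $(\XX,\Vert\cdot\Vert)$.

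For the converse, I would assume $\BB$ is unconditional in $(\XX,\Vert\cdot\Vert)$, with finite suppression unconditional constant $K_{su}$, and introduce the candidate renorming
\[
\VERT x\VERT=\sup\bigl\{\Vert P_A(x)\Vert \tq A\subseteq\NN\text{ finite}\bigr\}.
\]
The first routine check is that this yields a norm equivalent to $\Vert\cdot\Vert$: the estimate $\Vert x\Vert\le\VERT x\VERT\le K_{su}\Vert x\Vert$ follows from the convergence $\Vert x\Vert=\lim_N\Vert P_{\{1,\dots,N\}}(x)\Vert$ together with the definition of $K_{su}$, and the seminorm axioms pass through the supremum. The second check is that $\BB$ is $1$-suppression unconditional in $\VERT\cdot\VERT$: using $P_B\circ P_A=P_{A\cap B}$, for any $A\subseteq\NN$ one computes
\[
\VERT P_A(x)\VERT=\sup_{B\text{ finite}}\Vert P_B P_A(x)\Vert=\sup_{B\text{ finite}}\Vert P_{A\cap B}(x)\Vert\le\VERT x\VERT.
\]
Theorem~\ref{1QGTheorem} then upgrades this to $1$-quasi-greediness of $\BB$ in $(\XX,\VERT\cdot\VERT)$. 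I do not anticipate a genuine obstacle here, as the substance of the corollary is entirely carried by Theorem~\ref{1QGTheorem}; the renorming above is simply the standard device for exhibiting an isometrically suppression-unconditional presentation of an unconditional basis.
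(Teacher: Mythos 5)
Your proof is correct, and it is the standard argument this corollary is meant to summon: the paper itself offers no proof (the corollaries are stated as needing ``no further explanation''), and the renorming by $\sup_{A\text{ finite}}\Vert P_A(x)\Vert$ together with Theorem~\ref{1QGTheorem} is exactly the intended route. The only cosmetic imprecision is in the last step: Theorem~\ref{1QGTheorem} literally gives $C_w=1$ from $K_{su}=1$, and to conclude full $1$-quasi-greediness (i.e.\ $C_t=1$ as well) you should invoke either the paper's first corollary or the elementary observation from the introduction that a $K$-suppression unconditional basis is $K$-quasi-greedy, since $x-\GG_N(x)=P_{\NN\setminus\Lambda_N(x)}(x)$.
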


\subsection*{Acknowledgement}  F. Albiac  acknowledges the support of the Spanish Ministry for Economy and Competitivity Grant {\it Operators, lattices, and structure of Banach spaces}, 
reference number MTM2012-31286.

\begin{bibsection}
\begin{biblist}

\bib{AlbiacWojt2006}{article}{
   author={Albiac, F.},
   author={Wojtaszczyk, P.},
   title={Characterization of 1-greedy bases},
   journal={J. Approx. Theory},
   volume={138},
   date={2006},
   number={1},
   pages={65--86},
   issn={0021-9045},
}

\bib{DKKT2003}{article}{
   author={Dilworth, S. J.},
   author={Kalton, N. J.},
   author={Kutzarova, D.},
   author={Temlyakov, V. N.},
   title={The thresholding greedy algorithm, greedy bases, and duality},
   journal={Constr. Approx.},
   volume={19},
   date={2003},
   number={4},
   pages={575--597},
}

\bib{DilworthMitra2001}{article}{
   author={Dilworth, S. J.},
   author={Mitra, D.},
   title={A conditional quasi-greedy basis of $l_1$},
   journal={Studia Math.},
   volume={144},
   date={2001},
   number={1},
   pages={95--100},
}

\bib{GHO2013}{article}{
   author={Garrig{\'o}s, G.},
   author={Hern{\'a}ndez, E.},
   author={Oikhberg, T.},
   title={Lebesgue-type inequalities for quasi-greedy bases},
   journal={Constr. Approx.},
   volume={38},
   date={2013},
   number={3},
   pages={447--470},
}

\bib{GW2014}{article}{
   author={Garrig{\'o}s, G.},
   author={Wojtaszczyk, P.},
   title={Conditional quasi-greedy bases in Hilbert and Banach spaces},
   journal={Indiana Univ. Math. J.},
   volume={63},
   date={2014},
   number={4},
   pages={1017--1036},
}
 \bib{Wo2000}{article}{
 author={Wojtaszczyk, P.},
 title={Greedy algorithm for general biorthogonal systems},
 journal={J. Approx. Theory},
 volume={107},
 date={2000},
 number={2},
 pages={293--314},
}

 \end{biblist}
\end{bibsection}

\end{document}